\newtheorem{thm}{Theorem}[section]
\newtheorem{cor}[thm]{Corollary}
\newtheorem{prop}[thm]{Proposition}
\newtheorem{lem}[thm]{Lemma}
\theoremstyle{definition}
\theoremstyle{remark}
\let\c@equation\c@thm
\numberwithin{equation}{section}
\title{A cohomological property of semi-abelian $p$-groups}
\author{ Mohammed T. Benmoussa and Yassine Guerboussa$^{*}$}
\date{}
\begin{document}

\begin{abstract}
We prove a cohomological property for a class of finite $p$-groups introduced earlier by M. Y. Xu, which we call semi-abelian $p$-groups.  This result implies that a semi-abelian $p$-group has non-inner automorphisms of order $p$, which settles a longstanding problem for this class.  We answer also, independetly, an old question of M. Y. Xu about the power structure of semi-abelian $p$-groups.

\end{abstract}
\thanks{{\scriptsize
\hskip -0.4 true cm 
\newline Keywords:  cohomology; finite $p$-groups; automorphisms.\\
$*$Corresponding author}}
\maketitle

\begin{center}
Dedicated to Professor Ming-Yao Xu for his early work on finite $p$-groups.
\end{center}
\section{Introduction}

Let $G$ be a finite $p$-group.
Following M. Y. Xu (see \cite{Xu2}), we say that $G$ is {\itshape strongly semi- $p$-abelian}, if the following property holds in $G$ :
$$(xy^{-1})^{p^n}=1 \Leftrightarrow x^{p^n}=y^{p^n} \mbox{ for any postive integer } n.$$
For brevity, we shall use the term {\itshape semi-abelian} for such a group. It is easy to see that such a group satisfies the properties : 

\begin{enumerate}[(i)]
 
\item $\Omega_{n}(G)=\Omega_{\{n\}}(G)$.
\item $|G:G^{\{p^n\}}|=|\Omega_{n}(G)|$, and so $|G:G^{p^n}| \leq |\Omega_{n}(G)|$.
\end{enumerate}

Hence, semi-abelian $p$-groups share some nice properties with the regular $p$-groups introduced by P. Hall (see  \cite{Berk1} for their theory).  It is not difficult to show that every regular $p$-group is semi-abelian; however the class of semi-abelian $p$-groups is much larger, and in fact every finite $p$-group can occur as a quotient of a semi-abelian $p$-group (see Section 3).

Let $G$ be a regular $p$-group, and $1\textless N \lhd G$ such that $G/N$ is not cyclic.  P. Schmid showed in \cite{Sch}, that the Tate cohomology groups $\hat{H}^n(G/N,\operatorname{Z}(N))$ are all non-trivial; where $\operatorname{Z}(N)$ is considered as a $G/N$-module with the action induced by conjugation in $G$.  
Our first purpose is to show that Schmid's result holds in a more general context. 

\begin{thm}\label{main}
Let $G$ be a semi-abelian $p$-group, and $1\textless  N \lhd G$ such that $G/N$ is neither cyclic nor a generalized quaternion group.  Then $\hat{H}^n(G/N,\operatorname{Z}(N)) \neq 0$, for all integers $n$.  
\end{thm}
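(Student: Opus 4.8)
The plan is to view $A:=\operatorname{Z}(N)$ as a module over $Q:=G/N$ (the conjugation action of $G$ on $A$ is trivial on $N$, since $A$ is central in $N$, so it factors through $Q$) and to show that $A$ is cohomologically large by exploiting the semi-abelian power structure on the cyclic sections of $Q$. The computational engine is the following power--norm identity: if $g\in G$ has $g^{p}\in N$, so that $\bar g:=gN$ has order dividing $p$, and $a\in A$, then, because $a$ and its $G$-conjugates all lie in the abelian normal subgroup $A\lhd G$, one gets $(ga)^{p}=g^{p}\cdot N_{\langle\bar g\rangle}(a)$, where $N_{\langle\bar g\rangle}$ is the norm element of $\langle\bar g\rangle$ acting on $A$. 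As $a$ runs over $A$ the element $ga$ runs over the coset $gA$, so $N_{\langle\bar g\rangle}(A)=g^{-p}(gA)^{\{p\}}$ is, up to translation, the set of $p$-th powers in a single coset. The semi-abelian hypothesis says precisely that two elements of $G$ have equal $p$-th powers iff their quotient lies in $\Omega_{\{1\}}(G)$; intersecting with $A$ identifies the fibres of the $p$-power map on $gA$ with the cosets of $\Omega_{1}(A)$, whence $|N_{\langle\bar g\rangle}(A)|=|A:\Omega_{1}(A)|$.

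Next I would reduce to the case that $A$ has exponent $p$. The subgroup $\Omega_{1}(A)=\Omega_{1}(\operatorname{Z}(N))$ is characteristic in $A$, hence a nonzero elementary abelian $Q$-submodule, and it contains $\Omega_{1}(N\cap\operatorname{Z}(G))\neq 1$; a filtration argument along the characteristic subgroups $\Omega_{i}(A)$ (each factor an $\mathbb{F}_{p}Q$-module), together with the long exact Tate sequences, transfers nonvanishing from an elementary abelian factor to $A$ itself. So assume $A$ is an $\mathbb{F}_{p}Q$-module. Now the count above becomes decisive: for exponent $p$ we have $\Omega_{1}(A)=A$, so $N_{\langle\bar g\rangle}(A)=0$ for every $\bar g\in Q$ of order $p$. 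In representation-theoretic terms this says that $A|_{\langle\bar g\rangle}$ has no free $\mathbb{F}_{p}\langle\bar g\rangle$-summand, i.e.\ every Jordan block of $\bar g$ on $A$ has length $<p$, so $\hat H^{n}(\langle\bar g\rangle,A)\neq 0$ for all $n$; choosing $\bar g$ inside a subgroup $E\cong C_{p}\times C_{p}$ of $Q$ (which exists exactly because $Q$ is neither cyclic nor generalized quaternion), the restriction $A|_{E}$ is non-projective, and in particular $A$ is not cohomologically trivial over $Q$.

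Finally I would upgrade non-triviality to nonvanishing in every degree. By Tate duality it is enough to treat $n\ge 0$. Here the non-periodicity of $Q$ is essential: since $E\cong C_{p}\times C_{p}$ embeds in $Q$, the cohomology ring modulo nilpotents has Krull dimension $\ge 2$, and multiplication by a suitable positive-degree nonzerodivisor of $\hat H^{*}(Q,\mathbb{F}_{p})$ has nonzero cokernel in every degree; feeding the non-projectivity of $A$ (equivalently the nontriviality of its support variety) through the cup-product action of $\hat H^{*}(Q,\mathbb{F}_{p})$ on $\hat H^{*}(Q,A)$, and using that $\hat H^{n}(Q,\mathbb{F}_{p})\neq 0$ for all $n$ (valid for any nontrivial $p$-group) together with the trivial submodule $N\cap\operatorname{Z}(G)\le A^{Q}$ to seed a class, forces $\hat H^{n}(Q,A)\neq 0$ for all $n$. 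I expect this last step---converting the local, cyclic-by-cyclic data produced by the semi-abelian identity into nonvanishing for the whole group $Q$ in every single degree---to be the main obstacle, since it is exactly where isolated or periodic cancellations must be excluded and where the hypothesis on $Q$ does the real work; the semi-abelian input itself enters only through the clean identity $(ga)^{p}=g^{p}N_{\langle\bar g\rangle}(a)$ and the resulting vanishing of the cyclic norms, mirroring Schmid's treatment of the regular case.
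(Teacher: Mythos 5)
Your opening computation is sound and is in fact the engine of the paper's own proof: the identity $(ga)^{p}=g^{p}N_{\langle\bar g\rangle}(a)$ together with the semi-abelian hypothesis identifies $\ker N_{\langle\bar g\rangle}$ with $\Omega_{1}(A)$ and gives $|N_{\langle\bar g\rangle}(A)|=|A:\Omega_{1}(A)|=|A^{p}|$. The genuine gap is the step ``reduce to the case that $A$ has exponent $p$,'' and it cannot be repaired: non-vanishing of Tate cohomology does not pass from the factors of a filtration to the module itself via the long exact sequences. A concrete counterexample inside the theorem's own setting: take $p$ odd, $G=\langle a,b\mid a^{p^{2}}=b^{p}=1,\ a^{b}=a^{1+p}\rangle$ (of class $2$, hence regular, hence semi-abelian), $N=\langle a\rangle$, so $Q=G/N\cong C_{p}$ and $A=\operatorname{Z}(N)=N\cong\mathbb{Z}/p^{2}$ with $b$ acting as multiplication by $1+p$. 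Then $A^{Q}=\langle a^{p}\rangle=N_{Q}(A)$, so $\hat{H}^{0}(Q,A)=0$ and $A$ is cohomologically trivial; yet $\Omega_{1}(A)$ and $A/\Omega_{1}(A)$ are both the trivial module $\mathbb{F}_{p}$, with non-zero cohomology in every degree. (This $Q$ is cyclic, so it does not contradict the theorem --- but your reduction makes no use of the hypothesis on $Q$, and if it were valid the remainder of your argument would prove the conclusion for \emph{every} non-trivial $Q$, which this example refutes.) Everything after that point in your proposal --- the vanishing of the norms, the absence of free summands, the support-variety upgrade --- lives in the exponent-$p$ world and therefore never touches the actual module $\operatorname{Z}(N)$.

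The paper keeps $A$ intact and argues by contradiction with two nested subgroups. If $\hat{H}^{n}(Q,A)=0$ for a single $n$, then by Proposition \ref{Gas} (Gasch\"utz--Uchida, as sharpened by Hoechsmann--Roquette--Zassenhaus) $A$ is cohomologically trivial over every subgroup of $Q$; note that this one citation also does the work of your entire last paragraph, whose cup-product and support-variety sketch is too vague to check and is essentially this theorem in disguise. Choosing $S/N\leq Q$ of exponent $p$ and order $\geq p^{2}$ and $K/N\leq S/N$ of order $p$, the norm computation gives $A_{K/N}=A^{\tau}=A^{p}$ --- using both $|A^{\tau}|=|A^{p}|$ and the containment $A^{p}\leq A_{K/N}$ of Lemma \ref{L}, a containment your proposal never establishes --- and then Proposition \ref{Schmid} forces $\operatorname{C}_{S/N}(A^{p})=K/N$ while Lemma \ref{L} forces $\operatorname{C}_{S/N}(A^{p})=S/N$, a contradiction. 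It is precisely this interplay between $A^{p}$, $K/N$ and $S/N$ that handles modules of exponent larger than $p$, which your reduction tried, unsuccessfully, to argue away.
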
   

Let us note that P. Schmid has conjuctured that Theorem \ref{main} holds for an arbitrary finite p-group $G$ if one takes $N= \Phi (G)$. This conjecture has been refuted by A. Abdollahi in \cite{Abd}.  Although, it is interesting to find other classes of $p$-groups which satisfy the conclusion of Theorem \ref{main} (see \cite[Question 1.2]{Abd2}).\\

The above theorem is intimately related to studying non-inner automorphisms of finite $p$-groups.  An abelian normal subgroup $A$ of a group $G$ can be seen as a $G$-module via conjugation,  whence we can consider the group of crossed homomorphisms or derivations $\operatorname{Der}(G,A)$.  To each derivation $\delta \in \operatorname{Der}(G,A)$, we can associate an endomorphism $\phi_{\delta}$ of $G$, given by $\phi_{\delta}(x)=x \delta(x)$, $x\in G$. This map $\phi$ sends $\operatorname{Der}(G,A)$ into 
$$\operatorname{End}_A(G) =\{\theta \in \operatorname{End}(G)\, | \, x^{-1}\theta(x) \in A, \mbox{ for all }x\in G \}$$
 and in fact it defines a bijection between the two sets $\operatorname{Der}(G,A)$ and $\operatorname{End}_A(G)$.

If we consider only the set of derivations $\delta :G \rightarrow A$, that are trivial on $\operatorname{C}_G(A)$ (i.e. $\delta(x)=1$, for $x\in \operatorname{C}_G(A)$), which can be identified to $\operatorname{Der}(G/\operatorname{C}_G(A),A)$; then the map $\phi$ induces an isomorphism between $\operatorname{Der}(G/\operatorname{C}_G(A),A)$ and the group $\operatorname{\widetilde{C}} (A)$ of the automorphisms of $G$ acting trivially on $\operatorname{C}_G(A)$ and  $G/A$. 

It is straightforward to see that this isomorphism maps $\operatorname{Ider}(G/\operatorname{C}_G(A),A)$ into a group of inner automorphisms lying in $\operatorname{\widetilde{C}} (A)$, though an inner automorphism lying in $\operatorname{\widetilde{C}} (A)$ needs not necessarily be induced by an inner derivation; however this case can be avoided by assuming that $\operatorname{C}_G(\operatorname{C}_G(A))=A$, as if $\phi_{\delta}(x)=x^g$ for some $g \in G$ and all $x\in G$, then $g \in \operatorname{C}_G(\operatorname{C}_G(A))$, so $g$ lies in $A$ and $\delta$ is the inner derivation induced by $g^{-1}$.  We have established
\begin{prop}\label{Cores}
Let $G$ be a group, and $A$ be an abelian normal subgroup of $G$ such that $\operatorname{C}_G(\operatorname{C}_G(A))=A$; let $\operatorname{\widetilde{C}} (A)$ denote the group of the automorphisms of $G$ acting trivially on $\operatorname{C}_G(A)$ and  $G/A$.  Then there is an isomorphism from $\operatorname{Der}(G/\operatorname{C}_G(A),A)$ to $\operatorname{\widetilde{C}} (A)$, which maps $\operatorname{Ider}(G/\operatorname{C}_G(A),A)$ exactly to the inner automorphisms lying in $\operatorname{\widetilde{C}} (A)$.  
In particular if $\operatorname{\widetilde{C}} (A) \leq \operatorname{Inn} (G)$, then $\hat{H}^1(G/\operatorname{C}_G(A),A)=0$.   
\end{prop}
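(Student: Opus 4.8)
The plan is to work with the bijection $\phi\colon\operatorname{Der}(G,A)\to\operatorname{End}_A(G)$ already set up above and to analyse its restriction to the subgroup $\operatorname{Der}(G/\operatorname{C}_G(A),A)$ of derivations vanishing on $\operatorname{C}_G(A)$. Since $\phi_\delta(x)=x\delta(x)$, a derivation $\delta$ vanishes on $\operatorname{C}_G(A)$ exactly when $\phi_\delta$ fixes $\operatorname{C}_G(A)$ pointwise; and because $A$ is abelian we have $A\leq\operatorname{C}_G(A)$, so such a $\phi_\delta$ fixes $A$ pointwise as well and, as $x^{-1}\phi_\delta(x)=\delta(x)\in A$, induces the identity on $G/A$. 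Thus each such $\phi_\delta$ is a candidate element of $\operatorname{\widetilde{C}}(A)$, and the task breaks into three verifications: that every $\phi_\delta$ of this kind is an automorphism (so that it really lies in $\operatorname{\widetilde{C}}(A)$), that the restriction of $\phi$ surjects onto $\operatorname{\widetilde{C}}(A)$, and that it is a group homomorphism.

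The step I expect to be the main obstacle is showing that $\theta:=\phi_\delta$ is bijective, since for a general (possibly infinite) group an injective endomorphism need not be surjective. Injectivity is direct: if $\theta(x)=1$ then $x=\delta(x)^{-1}\in A$, and since $\theta$ fixes $A$ pointwise this gives $x=\theta(x)=1$. For surjectivity I would argue structurally rather than by cardinality: $\theta$ fixes $A$ pointwise, so $A\leq\theta(G)$, while $\theta(x)\equiv x\pmod A$ gives $\theta(G)A=G$; since $A\leq\theta(G)$ this collapses to $\theta(G)=G$. Surjectivity onto $\operatorname{\widetilde{C}}(A)$ is then obtained by running the correspondence backwards: given $\alpha\in\operatorname{\widetilde{C}}(A)$, the map $\delta(x)=x^{-1}\alpha(x)$ takes values in $A$ (as $\alpha$ is trivial on $G/A$), vanishes on $\operatorname{C}_G(A)$, satisfies the cocycle identity $\delta(xy)=\delta(x)^y\delta(y)$, and has $\phi_\delta=\alpha$. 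Finally, to see that $\phi$ is multiplicative one computes $(\phi_{\delta_1}\circ\phi_{\delta_2})(x)=x\delta_2(x)\,\delta_1(x\delta_2(x))$ and uses that $\delta_2(x)\in A\leq\operatorname{C}_G(A)$ together with the vanishing of $\delta_1$ on $\operatorname{C}_G(A)$ to reduce $\delta_1(x\delta_2(x))$ to $\delta_1(x)$, giving $\phi_{\delta_1}\phi_{\delta_2}=\phi_{\delta_1\delta_2}$; injectivity of $\phi$ is immediate from $\phi_\delta=\mathrm{id}\iff\delta=1$. This is precisely the place where the hypothesis that the derivations are trivial on $\operatorname{C}_G(A)$ is indispensable.

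It remains to match up the principal derivations. Under $\phi$ the inner derivation of $G/\operatorname{C}_G(A)$ induced by an element of $A$ is carried to a conjugation of $G$ by an element of $A$ (the derivation realizing conjugation by $g$ being the one attached to $g^{-1}$); since an element $a\in A$ commutes with $\operatorname{C}_G(A)$, such a conjugation fixes $\operatorname{C}_G(A)$ pointwise and hence lies in $\operatorname{\widetilde{C}}(A)$. Thus $\phi$ sends $\operatorname{Ider}(G/\operatorname{C}_G(A),A)$ into the inner automorphisms contained in $\operatorname{\widetilde{C}}(A)$. For the reverse inclusion, suppose $\phi_\delta\in\operatorname{\widetilde{C}}(A)$ coincides with a conjugation $\iota_g\colon x\mapsto g^{-1}xg$. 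As $\phi_\delta$ fixes $\operatorname{C}_G(A)$ pointwise, so does $\iota_g$, which says exactly that $g\in\operatorname{C}_G(\operatorname{C}_G(A))$; this is the single point at which the assumption $\operatorname{C}_G(\operatorname{C}_G(A))=A$ is used, forcing $g\in A$ and exhibiting $\delta$ as the inner derivation attached to $g^{-1}$. Consequently $\phi$ identifies $\operatorname{Ider}(G/\operatorname{C}_G(A),A)$ with precisely the inner automorphisms lying in $\operatorname{\widetilde{C}}(A)$. The final claim then follows at once: if $\operatorname{\widetilde{C}}(A)\leq\operatorname{Inn}(G)$, every element of $\operatorname{\widetilde{C}}(A)$ is an inner automorphism lying in $\operatorname{\widetilde{C}}(A)$, so the isomorphism carries $\operatorname{Der}(G/\operatorname{C}_G(A),A)$ onto $\operatorname{Ider}(G/\operatorname{C}_G(A),A)$, whence $\hat{H}^1(G/\operatorname{C}_G(A),A)=\operatorname{Der}(G/\operatorname{C}_G(A),A)/\operatorname{Ider}(G/\operatorname{C}_G(A),A)=0$.
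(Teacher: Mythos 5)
Your proposal is correct and follows the same route as the paper's own justification (given in the discussion preceding the proposition): restrict the bijection $\phi\colon\operatorname{Der}(G,A)\to\operatorname{End}_A(G)$ to derivations vanishing on $\operatorname{C}_G(A)$, and use $\operatorname{C}_G(\operatorname{C}_G(A))=A$ only to show that an inner automorphism in $\operatorname{\widetilde{C}}(A)$ is conjugation by an element of $A$ and hence comes from an inner derivation. You merely spell out details the paper takes as known, namely the bijectivity of each $\phi_\delta$ and the multiplicativity of $\phi$.
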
              
This well known fact in the litterature, which can be found for instance in \cite{Gru}, permits to reduce the problem of existence of non inner automorphisms of some group to a cohomological problem. For instance, this allowed W. Gasch\"{u}tz to prove that any non simple finite $p$-group has non inner automorphisms of p-power order.

It is conjectured by Y. Berkovich that a more refined version of Gasch\"{u}tz's result holds, more precisely that a non simple finite $p$-group has non inner automorphisms of order $p$ (see \cite[Problem 4.13]{Kou}).  While it is not clear that a positive answer to it, has deep implications for our understanding of finite $p$-groups, this problem received a large interest, and its hardness may stimulates further developments of new techniques in  finite $p$-group theory.  The reader may find more information and the relevant references about this problem in \cite{Abd2}.

Our second result settles this problem in the class of semi-abelian $p$-groups.
\begin{thm}\label{main2}
Let $G$ be a semi-abelian finite $p$-group.  Then $G$ has a non inner automorphism of order $p$. 
\end{thm}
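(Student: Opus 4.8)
The plan is to produce the required automorphism inside the group $\operatorname{\widetilde{C}}(A)$ attached to a well-chosen abelian normal subgroup $A$, with Theorem \ref{main} supplying the non-triviality of the relevant Tate cohomology and Proposition \ref{Cores} converting it into a non-inner automorphism. First I would dispose of the degenerate cases. Since Berkovich's problem concerns non-simple groups, I assume $|G|>p$; if $G$ is abelian then $\operatorname{Inn}(G)=1$ while $p$ divides $|\operatorname{Aut}(G)|$, so Cauchy's theorem already gives a non-inner automorphism of order $p$. Hence assume $G$ is non-abelian, so that $\Phi(G)\neq 1$ and $V:=G/\Phi(G)$ is elementary abelian of rank at least $2$; in particular $V$ is neither cyclic nor generalized quaternion, and it matches the hypotheses of Theorem \ref{main} with $N=\Phi(G)$.

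Put $A=\operatorname{Z}(\Phi(G))$, an abelian normal subgroup of $G$, and split according to the centralizer $\operatorname{C}_G(A)$, which always contains $\Phi(G)$. If $\operatorname{C}_G(A)\neq\Phi(G)$, then $V$ does not act faithfully on $A$, and an elementary construction (independent of Theorem \ref{main}) yields a non-inner automorphism of order $p$ fixing $\Phi(G)$ elementwise. The substantial case is $\operatorname{C}_G(A)=\Phi(G)$. Here one checks readily that $\operatorname{C}_G(\operatorname{C}_G(A))=\operatorname{C}_G(\Phi(G))=\operatorname{Z}(\Phi(G))=A$ (since $\operatorname{C}_G(\Phi(G))\subseteq\operatorname{C}_G(A)=\Phi(G)$), so Proposition \ref{Cores} applies with $\operatorname{C}_G(A)=\Phi(G)$ and $G/\operatorname{C}_G(A)=V$. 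Theorem \ref{main}, applied to $N=\Phi(G)$ so that $\operatorname{Z}(N)=A$, gives $\hat{H}^1(V,A)\neq 0$; by Proposition \ref{Cores} this forces $\operatorname{\widetilde{C}}(A)\not\leq\operatorname{Inn}(G)$, so $G$ has a non-inner automorphism acting trivially on $\Phi(G)$ and on $G/A$ — a priori only of $p$-power order.

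The main obstacle is to sharpen this to order exactly $p$. Under the bijection $\phi$ of the introduction, $\operatorname{\widetilde{C}}(A)\cong\operatorname{Der}(V,A)$, and since every $\phi_{\delta}$ fixes $A=\operatorname{C}_G(A)$ pointwise one computes $\phi_{\delta}^{\,k}(x)=x\,\delta(x)^{k}$; hence $\phi_{\delta}$ has order dividing $p$ precisely when $\delta(V)\subseteq\Omega_{1}(A)$. Thus what must be shown is that some derivation valued in $\Omega_{1}(A)$ is non-inner, i.e. that the image of $\operatorname{Der}(V,\Omega_{1}(A))$ in $\hat{H}^1(V,A)=\operatorname{Der}(V,A)/\operatorname{Ider}(V,A)$ is non-zero. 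This does not follow formally from $\hat{H}^1(V,A)\neq 0$: a non-inner class of $p$-power order need not be represented by a derivation of order $p$, the failure being governed by the extension $0\to\operatorname{Ider}(V,A)\to\operatorname{Der}(V,A)\to\hat{H}^1(V,A)\to 0$, which may be non-split in exactly the dangerous way.

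To clear this obstacle I would exploit the full force of Theorem \ref{main} — the non-vanishing of $\hat{H}^{n}(V,A)$ in \emph{every} degree $n$, not merely $n=1$ — together with the power structure of the semi-abelian group $G$. Concretely, I would feed the short exact sequence of $V$-modules $0\to\Omega_{1}(A)\to A\to A/\Omega_{1}(A)\to 0$ into the long exact Tate sequence and use properties (i) and (ii), which pin down $\Omega_{1}$ and balance the $p$-power map, to transport the non-vanishing from $A$ onto the elementary abelian module $\Omega_{1}(A)$ while controlling the inner part $\operatorname{Ider}$; this should produce a non-inner derivation $\delta\colon V\to\Omega_{1}(A)$. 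The associated $\phi_{\delta}$ is then a non-inner automorphism of order $p$, completing the proof. I expect this transfer — reconciling the cohomological non-vanishing with the requirement that the representing cocycle be annihilated by $p$ — to be the technical heart of the argument.
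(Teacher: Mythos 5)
Your setup coincides with the paper's: take $A=\operatorname{Z}(\Phi(G))$, reduce to the case $\operatorname{C}_G(A)=\Phi(G)$ (your unexplained ``elementary construction'' in the other case is exactly the cited Proposition \ref{Deac} of Deaconescu and Silberberg), and combine Proposition \ref{Cores} with Theorem \ref{main} applied to $N=\Phi(G)$. You also correctly isolate the one remaining issue: a non-inner element of $\operatorname{\widetilde{C}}(A)$ a priori only has $p$-power order, and one must produce a non-inner derivation $\delta$ with $\delta(x)^p=1$. But at precisely this point your argument stops being a proof. The proposed transfer through the long exact sequence of $0\to\Omega_1(A)\to A\to A/\Omega_1(A)\to 0$ is never carried out, and it is not clear it can be: the image of $H^1(V,\Omega_1(A))$ in $H^1(V,A)$ is the kernel of $H^1(V,A)\to H^1(V,A/\Omega_1(A))$, and nothing in Theorem \ref{main} (which gives non-vanishing, not vanishing) forces that kernel to be non-zero. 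You acknowledge this yourself by calling the step the ``technical heart'' that you ``expect'' to work.

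The missing idea is not homological at all; it is one more application of the semi-abelian identity. For any derivation $\delta\colon G\to A$ trivial on $\Phi(G)$ one has
$$\delta(x^p)=\delta(x)\,\delta(x)^x\cdots\delta(x)^{x^{p-1}}=(\delta(x)x^{-1})^p x^p,$$
and since $x^p\in\Phi(G)$ this equals $1$; semi-abelianness (with $a=\delta(x)x^{-1}$ and $b=x^{-1}$, so $a^p=b^p$ forces $(ab^{-1})^p=\delta(x)^p=1$) then shows that \emph{every} element of $\operatorname{Der}(G/\Phi(G),A)$, hence of $\operatorname{\widetilde{C}}(A)$, has order dividing $p$. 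Under the contradiction hypothesis all of $\operatorname{\widetilde{C}}(A)$ would then be inner, whence $\hat{H}^1(G/\Phi(G),A)=0$ by Proposition \ref{Cores}, contradicting Theorem \ref{main}. This single computation dissolves the obstacle you flagged; without it (or a worked-out substitute for your cohomological transfer) the proposal is incomplete at its decisive step.
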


Our notation is standard in the litterature.  Let $S$ be a group.  For a positive integer $n$,  we denote by $S^{\{p^n\}}$ the set of  the $p^n$-th powers of all the  elements of $S$ and by $S^{p^n}$ the subgroup generated by $S^{\{p^n\}}$.  The subgroup generated by the elements of order dividing $p^n$ is denoted by $\Omega_{n}(S)$.  We denote by $\lambda_{n}^p(S)$ the terms of the lower $p$-series of $S$ which are defined inductively by :
$$\lambda_{1}^p(S)=S \mbox{, and } \lambda_{n+1}^p(S)=[\lambda_{n}^p(S),S]\lambda_{n}^p(S)^p.$$ 
If $A$ is an $S$-module, then $A_S$ denotes the subgroup of fixed elements in $A$ under the action of $S$.

The remainder of the paper is divided into two sections.  In Section 2, we prove Theorem \ref{main} and Theorem \ref{main2};  and in Section 3 we answer an old question of M. Y. Xu (see \cite[Problem 3]{Xu2}) about the power structure of semi-abelian $p$-groups.  This result follows quickly from a result of D. Bubboloni and  G. Corsi Tani (see \cite{Bub}), but it seems not that this link has been noted before. 
\section{Proofs}
Let $Q$ be a finite $p$-group, and $A$ be a $Q$-module of $p$-power order.  Recall that $A$ is said to be cohomologically trivial if $\hat{H}^k(S,A)=0$ for all $S \leq Q$ and all integers $k$.  

It is proved by W. Gasch\"{u}tz and K. Ushida (independently)  that $\hat{H}^1(Q,A)=0$ implies that $\hat{H}^k(S,A)=0$ for all $S \leq Q$ and all integers $k \geq 1$ (see \cite[ Lemma 2, \S  7.5 ]{Gru}).    This statement can be slightly improved as noted in \cite{Hoe}.    

\begin{prop}\label{Gas}
Let $Q$ be a finite $p$-group, and $A$ be a $Q$-module which is also a finite $p$-group.  If $\hat{H}^n(Q,A)=0$ for some integer $n$, then  $A$ is cohomologically trivial.  
\end{prop}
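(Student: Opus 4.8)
The plan is to reduce, by dimension shifting, to the case $n=1$ that is already covered by the Gasch\"{u}tz--Ushida theorem, and then to manufacture a \emph{second} adjacent vanishing so that the standard cohomological triviality criterion (two consecutive vanishing Tate groups) applies.

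First I would normalise the degree. Since $A$ is a finite $p$-group, so are the coinduced module $A^{*}=\operatorname{Hom}_{\mathbb{Z}}(\mathbb{Z}[Q],A)$ and the induced module $A_{*}=\mathbb{Z}[Q]\otimes_{\mathbb{Z}}A$ (both are isomorphic to $A^{|Q|}$ as abelian groups), and both are cohomologically trivial: as $\mathbb{Z}[Q]$ is free over $\mathbb{Z}[S]$ for every $S\le Q$, their restrictions to $S$ are again (co)induced from the trivial subgroup, whence $\hat{H}^{k}(S,A^{*})=\hat{H}^{k}(S,A_{*})=0$ for all $S\le Q$ and all $k$. The short exact sequences $0\to A\to A^{*}\to A_{1}\to 0$ and $0\to A_{-1}\to A_{*}\to A\to 0$ then yield, via the long exact sequences in Tate cohomology (natural in $S$), isomorphisms $\hat{H}^{k}(S,A_{1})\cong\hat{H}^{k+1}(S,A)$ and $\hat{H}^{k}(S,A_{-1})\cong\hat{H}^{k-1}(S,A)$ for every $S\le Q$. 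In particular $A$ is cohomologically trivial if and only if $A_{1}$ (resp. $A_{-1}$) is, and each $A_{\pm 1}$ is again a finite $p$-group. Iterating the appropriate construction $|n-1|$ times produces a finite $p$-group $Q$-module $A'$ with $\hat{H}^{1}(Q,A')\cong\hat{H}^{n}(Q,A)=0$ and with $A$ cohomologically trivial if and only if $A'$ is.

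Now $\hat{H}^{1}(Q,A')=0$, so the Gasch\"{u}tz--Ushida theorem gives $\hat{H}^{k}(Q,A')=0$ for all $k\ge 1$; in particular $\hat{H}^{1}(Q,A')=\hat{H}^{2}(Q,A')=0$. Two consecutive vanishing Tate cohomology groups at $Q$ (the only nontrivial Sylow subgroup to be checked, the condition being automatic at the trivial Sylow subgroups for the other primes) force, by the standard criterion of Tate and Nakayama (see \cite{Gru}), the module $A'$ to be cohomologically trivial, i.e. $\hat{H}^{k}(S,A')=0$ for all $S\le Q$ and all integers $k$; transporting this back through the dimension-shift isomorphisms shows that $A$ is cohomologically trivial. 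The step I expect to be the main obstacle is precisely the passage from one vanishing to two: a single isolated vanishing is in general far too weak, and it is here that both hypotheses are used essentially. The reason the result holds for $p$-groups with $p$-group coefficients is that the Herbrand quotient of a finite module over a cyclic group equals $1$, so that in the cyclic case a single vanishing already forces its neighbour; Gasch\"{u}tz--Ushida is exactly the device that propagates the degree-$1$ vanishing upward and thereby supplies the needed second one.
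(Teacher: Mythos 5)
Your argument is correct. Note, though, that the paper does not actually prove Proposition~\ref{Gas}: it is stated as a known refinement of the Gasch\"utz--Ushida theorem with a pointer to \cite{Hoe}, so there is no internal proof to compare against; what you have written is a legitimate self-contained derivation from exactly the ingredient the paper does quote. Your dimension-shifting step is sound (the induced and coinduced modules of a finite $p$-group module are again finite $p$-groups and are cohomologically trivial for every subgroup, and the shift isomorphisms are natural in $S$), so the reduction to $\hat{H}^1(Q,A')=0$ with $A'$ a finite $p$-group is fine, and Gasch\"utz--Ushida then applies. The one ingredient you import beyond the paper's quoted lemma is the Nakayama--Tate two-consecutive-vanishing criterion, and you are right that it is genuinely needed: vanishing in all degrees $k\ge 1$ for all $S\le Q$ does not descend to degrees $k\le 0$ by dimension shifting alone (trying to shift downward just reproduces the unknown $\hat{H}^0$), so some form of that criterion, or the projectivity characterization of cohomologically trivial modules, must be invoked. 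Your closing remark about the Herbrand quotient correctly identifies why finiteness of $A$ matters, though it is not needed for the proof as written. In short: correct, complete modulo the standard references, and arguably more informative than the paper's bare citation.
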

We shall use Proposition \ref{Gas} to reduce the proof of Theorem \ref{main} to the non-vanishing of the Tate cohomology groups in dimension $0$, which is easier to handle.

We need also the following result of Schmid (see \cite[Proposition 1]{Sch}).
\begin{prop}\label{Schmid}
Let $Q$ be a finite $p$-group, and $A \neq 1$ be a $Q$-module which is also a finite $p$-group.  If  $A$ is cohomologically trivial, then $\operatorname{C}_Q(A_K)=K$, for every $K \leq Q$.  
\end{prop}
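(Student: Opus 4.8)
The plan is to establish the two inclusions of $\operatorname{C}_Q(A_K)=K$ separately. One is immediate: each element of $K$ fixes $A_K$ pointwise by the very definition of $A_K$, so $K\leq \operatorname{C}_Q(A_K)$. The entire content lies in the reverse inclusion $\operatorname{C}_Q(A_K)\leq K$, which I would prove by contradiction. Assume $K<\operatorname{C}_Q(A_K)$. Both are finite $p$-groups, and since a proper subgroup of a finite $p$-group has a strictly larger normalizer, the quotient $\operatorname{N}_{\operatorname{C}_Q(A_K)}(K)/K$ is a non-trivial $p$-group; I would choose $g$ in $\operatorname{N}_{\operatorname{C}_Q(A_K)}(K)$ whose image in this quotient has order $p$ and set $L=\langle K,g\rangle$. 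Then $K\lhd L$, the quotient $L/K$ is cyclic of order $p$, and $g$ acts trivially on $A_K$ because $g\in\operatorname{C}_Q(A_K)$.

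I would then record two consequences of this configuration. First, as $K$ is normal in $L$, the subgroup $A_K$ is $L$-invariant, so $L/K$ acts on $A_K$; because $K$ acts trivially on $A_K$ and the generator $g$ of $L/K$ fixes $A_K$, the whole group $L/K$ acts trivially, whence $A_L=A_K$. Second, I would invoke cohomological triviality only in dimension $0$: writing $N_S\colon A\to A_S$ for the norm $N_S(a)=\sum_{s\in S}s\cdot a$, the vanishing $\hat{H}^0(S,A)=A_S/\operatorname{im}N_S=0$ applied to $S=K$ and $S=L$ says exactly that $N_K$ maps $A$ onto $A_K$ and $N_L$ maps $A$ onto $A_L$.

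The decisive step is a factorization of the norm along the extension $L/K$. Using the coset representatives $1,g,\dots,g^{p-1}$ of $K$ in $L$, one obtains $N_L(a)=\sum_{i=0}^{p-1}g^i\,N_K(a)$; since $N_K(a)\in A_K$ is fixed by $g$, this collapses to $N_L(a)=p\,N_K(a)$, so $\operatorname{im}N_L=p\,\operatorname{im}N_K=p\,A_K$. Combining this with the surjectivity statements and with $A_L=A_K$ yields $A_K=A_L=\operatorname{im}N_L=pA_K$. But a $p$-group acting on the non-trivial finite $p$-group $A$ has non-trivial fixed points, so $A_K\neq 0$, and a non-zero finite abelian $p$-group can never satisfy $pA_K=A_K$. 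This contradiction forces $\operatorname{C}_Q(A_K)=K$.

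I expect the main obstacle to be finding the correct reduction rather than the final computation: the key realization is that it suffices to handle an extension $L/K$ of degree $p$ and that only the degree-zero groups $\hat{H}^0$ are needed, after which the argument reduces to the elementary identity $N_L=p\,N_K$ on $A_K$ together with the contradiction $pA_K=A_K$. The passage to an order-$p$ step is then just the standard fact that normalizers grow in finite $p$-groups.
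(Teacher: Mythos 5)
Your argument is correct. Note that the paper itself does not prove this proposition at all: it is quoted from Schmid's article (his Proposition~1) and used as a black box, so there is no in-paper proof to compare against. Your reduction --- pass to $L=\langle K,g\rangle$ with $K\lhd L$ and $|L:K|=p$ inside $\operatorname{C}_Q(A_K)$ via the normalizer-growth property, observe $A_L=A_K$, and then use only the vanishing of $\hat{H}^0$ for $K$ and $L$ together with the factorization of the norm over coset representatives to get $A_K=pA_K$, contradicting $A_K\neq 0$ --- is essentially Schmid's original proof, and every step checks out (in particular the identity $N_L(a)=pN_K(a)$ is valid because $N_K(a)\in A_K$ is fixed by $g$, and $A_K\neq 0$ follows from the standard fixed-point count for a $p$-group acting on a nontrivial finite $p$-group). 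A pleasant feature worth noting is that you only invoke cohomological triviality in degree zero for the two subgroups $K$ and $L$, which is exactly the economy the surrounding paper exploits elsewhere via Proposition~\ref{Gas}.
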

We need also to prove the following

\begin{lem}\label{L}
Under the assumption of Theorem \ref{main}, set $A=\operatorname{Z}(N)$ and  let $S/N$ be a subgroup of exponent $p$ of $G/N$.  Then $A^p \leq A_{S/N}$, so that $\operatorname{C}_{S/N}(A^p)=S/N$.
\end{lem}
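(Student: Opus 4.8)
The plan is to reduce the whole statement to a single application of the semi-abelian law to the pair of elements $sa$ and $as$, where $a\in A$ and $s\in S$. First I would record the structural facts. Since $A=\operatorname{Z}(N)$ is characteristic in $N\lhd G$, it is a normal abelian subgroup of $G$, so $[A,G]\le A$ and conjugation by any $g\in G$ restricts to an automorphism of $A$; moreover $N$ centralises $A$, so the action of $G$ on $A$ factors through $G/N$. Fix $s\in S$ and let $\sigma$ be the automorphism of $A$ induced by conjugation by $s$ (say $a^s=s^{-1}as=\sigma(a)$). Because $S/N$ has exponent $p$ we have $s^p\in N$, and since $N$ centralises $A$ this gives \emph{both} $\sigma^p=1$ \emph{and} the fact that $s^p$ commutes with every element of $A$. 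The goal $A^p\le A_{S/N}$ is equivalent to proving $\sigma(a^p)=a^p$ for every $a\in A$ and every such $s$, because $A^p$ is generated by the elements $a^p$ and the fixed points form a subgroup; the assertion $\operatorname{C}_{S/N}(A^p)=S/N$ is then immediate.

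The computational heart is to expand the two $p$-th powers. Conjugating successively, and using that $A$ is abelian so that the factors may be freely reordered, one obtains
$$(sa)^p = s^p\,\sigma^{p-1}(a)\sigma^{p-2}(a)\cdots\sigma(a)\,a, \qquad (as)^p = a\,\sigma^{-1}(a)\cdots\sigma^{-(p-1)}(a)\,s^p.$$
Since $\sigma^p=1$, the exponents $0,-1,\dots,-(p-1)$ run over the same residues modulo $p$ as $0,1,\dots,p-1$, so both products over $A$ equal the same element $w:=\prod_{i=0}^{p-1}\sigma^i(a)\in A$. As $s^p$ centralises $A$ it commutes with $w$, whence
$$(sa)^p = s^p w = w s^p = (as)^p.$$

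With the two $p$-th powers equal, the semi-abelian law (with $n=1$) applied to $x=sa$ and $y=as$ yields $\bigl((sa)(as)^{-1}\bigr)^p=1$. A direct commutator computation gives $(sa)(as)^{-1}=sas^{-1}a^{-1}=\sigma^{-1}(a)a^{-1}$, an element of the abelian group $A$, so $1=\bigl(\sigma^{-1}(a)a^{-1}\bigr)^p=\sigma^{-1}(a^p)a^{-p}$, that is $\sigma^{-1}(a^p)=a^p$ and hence $\sigma(a^p)=a^p$. This is precisely the required fixity; letting $a$ and $s$ vary gives $A^p\le A_{S/N}$, and then $\operatorname{C}_{S/N}(A^p)=S/N$.

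The only delicate point is the choice of the two elements $sa$ and $as$: their $p$-th powers coincide \emph{exactly} because the exponent-$p$ hypothesis on $S/N$ forces $s^p\in N$ to centralise $A$ (which also supplies $\sigma^p=1$), and it is this equality that the semi-abelian law converts into the vanishing of $\bigl(\sigma^{-1}(a)a^{-1}\bigr)^p$. Everything else is routine bookkeeping; no cohomology is needed for this lemma, and the hypotheses that $G/N$ is neither cyclic nor generalized quaternion play no role here, entering only in the later steps of Theorem \ref{main}.
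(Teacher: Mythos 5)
Your proof is correct. It rests on the same two ingredients as the paper's argument --- the fact that $s^p\in N$ is centralized by $A=\operatorname{Z}(N)$, and the semi-abelian law --- but it packages them differently. The paper applies the law twice: first to the equality $x^p=(x^a)^p$ (valid since $a$ centralizes $x^p\in N$) to extract $[x,a]^p=1$, and then a second time, to the pair $a^x=a[a,x]$ and $a$, to convert $[a,x]^p=1$ into $(a^x)^p=a^p$. You instead apply the law exactly once, to the pair $sa$ and $as$: the exponent-$p$ hypothesis makes $(sa)^p=(as)^p$ after the collection computation, and because $(sa)(as)^{-1}=\sigma^{-1}(a)a^{-1}$ already lies in the abelian group $A$, the conclusion $\sigma(a^p)=a^p$ drops out immediately without a second invocation of the law. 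The trade-off is that you pay up front with the explicit expansion of the two $p$-th powers (together with the observation that $\sigma^p=1$ makes the two products over $A$ coincide), whereas the paper's commutator route avoids any collection formula at the cost of a second, slightly less transparent application of the semi-abelian property. Both arguments use only $s^p\in N$ and $A=\operatorname{Z}(N)$, and you are right that the hypothesis that $G/N$ is neither cyclic nor generalized quaternion plays no role in this lemma.
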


\begin{proof}
Let be $x \in S$ and $a \in A$.  We have $x^p \in N$, hence 
$$x^p = (x^p)^a=(x^a)^p=(x[x,a])^p.$$
As $G$ is semi-abelian, we have $[x,a]^p=1$.  It follows that $(a^{-1}a[a,x])^p=[a,x]^p=1$, and again since $G$ is semi-abelian, we have 
$$a^p=(a[a,x])^p=(a^x)^p=(a^p)^x.$$
This shows that $A^p$ is centralized by every element of $S/N$.           
\end{proof}

\begin{proof}[Proof of Theorem \ref{main}]

Assume for a contradiction 
that $\hat{H}^n(G/N,A)=0$ for some integer $n$, where $A$ denotes $\operatorname{Z}(N)$.  As $G/N$ is not cyclic and different from the generalized quternion groups $Q_{2^m}$, there is in $G/N$ a subgroup $S/N$ of exponent $p$ and order $\geq p^2$.  It follows from Proposition \ref{Gas},   that $\hat{H}^n(S/N,A) = 0$, so $A$ is a cohomologically trivial $S/N$-module.  Let $K/N \leq S/N$ be a subgroup of order $p$.  Proposition \ref{Gas} implies that $\hat{H}^0(K/N,A) = 0$.  We have $\hat{H}^0(K/N,A) = A_{K/N}/A^{\tau}=0$, where $A^{\tau}$ is the image of $A$ under the trace homomorphism $\tau : A \rightarrow A$ induced by $K/N$.  As $K/N$ is cyclic of order $p$, our trace map is given by 

$$a^{\tau}= aa^x\dots a^{x^{p-1}}\mbox{ for } a\in A \mbox{,  and any fixed }x\in K-N$$ 
from which it follows 
$$a^{\tau}=(ax^{-1})^{p}x^p.$$
Now as $G$ is semi-abelian, $a \in \ker\tau$ if, and only if $a^p=1$; that is $\operatorname{ker}\tau=\Omega_1(A)$.  This implies that $|A^{\tau}|=|A^p|$.  As $A_{K/N}=A^{\tau}$, and $A^p \leq A_{K/N}$ by Lemma \ref{L}, we have $A^p=A_{K/N}$.  By Proposition \ref{Schmid}, $\operatorname{C}_{S/N}(A^p)=\operatorname{C}_{S/N}(A_{K/N})=K/N$, however Lemma \ref{L} implies that $S/N=K/N$, a contradiction.   
      
\end{proof}
Before proving Theorem \ref{main2}, we need the following reduction from \cite{Deac}.
\begin{prop}\label{Deac}
Let $G$ be a finite $p$-group such that   $\operatorname{C}_G(\operatorname{Z}(\Phi(G)) \neq \Phi(G)$.  Then $G$ has a non inner automorphism of order $p$.
\end{prop}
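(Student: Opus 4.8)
The plan is to produce the required automorphism inside the stability group of the chain $1 \le \Phi(G) \le G$, that is, among the automorphisms fixing $\Phi(G)$ elementwise and inducing the identity on $G/\Phi(G)$. Write $\Phi = \Phi(G)$, $Z = \operatorname{Z}(\Phi)$ and $M = \Omega_1(Z)$, and note that $\Phi$ centralises $Z$, so $\Phi \le \operatorname{C}_G(Z)$ always holds, the hypothesis being that this inclusion is strict. Since $M$ is elementary abelian and $\Phi$ acts trivially on it, $M$ is a module for $V := G/\Phi$; as $V$ is elementary abelian we are in a linear situation. Before the main construction I would dispose of the degenerate cases: if $\Phi = 1$ then $G$ is elementary abelian and any element of order $p$ of $\operatorname{Aut}(G)=\mathrm{GL}$ is non-inner, while if $G$ is cyclic of order $\ge p^2$ one reads off an automorphism of order $p$ from $\operatorname{Aut}(G)$ directly; so I may assume $G$ non-cyclic with $\Phi \ne 1$, whence $1 \ne \operatorname{Z}(G)\cap\Phi \le Z$ and $M_0 := M \cap \operatorname{Z}(G) \ne 1$.

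First I would set up the parametrisation. To each crossed homomorphism $\mu \in \operatorname{Der}(V, M)$ I associate the map $\alpha_\mu(x) = x\,\mu(x)$; the cocycle identity $\mu(xy)=\mu(x)^y\mu(y)$ shows $\alpha_\mu$ is an endomorphism, it fixes $\Phi$ pointwise (as $\mu$ kills $\Phi$) and induces the identity on $V$, and since $\mu$ takes values in the exponent-$p$ group $M$ one computes $\alpha_\mu^{\,k}(x)=x\,\mu(x)^k$, so $\alpha_\mu^{\,p}=\mathrm{id}$ and $\alpha_\mu$ is an automorphism of order dividing $p$. This realises $\operatorname{Der}(V,M)$ as an elementary abelian subgroup $\mathcal S \le \operatorname{Aut}(G)$ of automorphisms of order $\le p$ fixing $\Phi$ elementwise. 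Next I would identify which members of $\mathcal S$ are inner: if $\alpha_\mu = \mathrm{conj}_h$ then $h$ must centralise $\Phi$, and conversely for $h \in \operatorname{C}_G(\Phi)$ a standard computation in the stability group gives $[x,h]\in Z$, with $\mu_h := [\,\cdot\,,h]$ a derivation; thus $\mathcal S \cap \operatorname{Inn}(G)$ is exactly the image of $B := \{h \in \operatorname{C}_G(\Phi): [G,h]\le M\}$ under $h\mapsto \mu_h$, a subgroup isomorphic to $B/\operatorname{Z}(G)$ which contains $\operatorname{Ider}(V,M)$ (take $h \in M$). Consequently $G$ has a non-inner automorphism of order $p$ as soon as $\mathcal S \neq \mathcal S\cap\operatorname{Inn}(G)$, i.e. as soon as $\operatorname{Der}(V,M)$ is strictly larger than the image of $B$.

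The crux, and the step I expect to be the main obstacle, is to convert the hypothesis $\operatorname{C}_G(Z)\neq\Phi$ into this strict inequality. Here I would first work with the central sub-family $\operatorname{Hom}(V, M_0)\subseteq \operatorname{Der}(V,M)$, of order $|M_0|^{d}$ with $d=\dim_{\mathbb{F}_p}V$, whose inner members come from $B_0 := \{h\in\operatorname{C}_G(\Phi):[G,h]\le M_0\}$ and inject into $\operatorname{Hom}(V,M_0)$ via $h\mapsto[\,\cdot\,,h]$; the family exhausts the inner ones precisely when this injection is onto. To break surjectivity I would use a generator $g\in\operatorname{C}_G(Z)\setminus\Phi$ furnished by the hypothesis: choosing a minimal generating set of $G$ with $g$ as one member, and fixing $t \in M_0\setminus\{1\}$, one writes down the central automorphism $\alpha$ of order $p$ fixing $\Phi$ and all other generators while sending $g \mapsto g t$, and one must show $\alpha$ cannot equal $\mathrm{conj}_h$; concretely any such $h$ would centralise the maximal subgroup complementary to $g$ yet satisfy $[g,h]=t$, and the constraints coming from $g \in \operatorname{C}_G(Z)$ are what should be shown to exclude this. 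The delicate point is that central automorphisms need not always suffice, so if this fails one must enlarge the search to the full $\operatorname{Der}(V,M)$ and compare its order with $|B/\operatorname{Z}(G)|$ through the exact sequence $1\to\operatorname{Ider}(V,M)\to\operatorname{Der}(V,M)\to H^1(V,M)\to 1$; carrying out this comparison, and thereby pinning down exactly how $\operatorname{C}_G(Z)\supsetneq\Phi$ forces a non-inner element, is the heart of the argument.
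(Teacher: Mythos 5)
Your setup is sound, but note first that the paper itself contains no proof of this Proposition: it is imported verbatim as a reduction from Deaconescu and Silberberg \cite{Deac}, so your attempt has to stand entirely on its own. The first two paragraphs do stand: the correspondence $\mu \mapsto \alpha_\mu$ between $\operatorname{Der}(G/\Phi,\Omega_1(\operatorname{Z}(\Phi)))$ and automorphisms of order dividing $p$ fixing $\Phi$ elementwise is correct (and is essentially the machinery of Proposition \ref{Cores} of the paper), and your identification of the inner ones with $B/\operatorname{Z}(G)$ is right, the key point being $[G,\operatorname{C}_G(\Phi)] \le \Phi \cap \operatorname{C}_G(\Phi) = \operatorname{Z}(\Phi)$, which you have.

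The genuine gap is that the proof stops exactly where the theorem begins: the hypothesis $\operatorname{C}_G(\operatorname{Z}(\Phi)) \neq \Phi$ appears only in your last paragraph, and there you never carry out the argument. You propose the automorphism $\alpha$ with $\alpha(g)=gt$, say one ``must show'' it is not conjugation by any $h$, concede that ``central automorphisms need not always suffice,'' and defer the comparison of $|\operatorname{Der}(V,M)|$ with $|B/\operatorname{Z}(G)|$ as ``the heart of the argument.'' No contradiction is derived and no inequality is established; as written this is a plan, and the step left open is the entire content of the Deaconescu--Silberberg theorem. To see concretely where you stand: let $H$ be the maximal subgroup generated by $\Phi$ and your complementary generators, so $\alpha$ fixes $H$ pointwise and $\alpha=\mathrm{conj}_h$ forces $h \in \operatorname{C}_G(H)$ with $[g,h]=t\neq 1$. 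If $h \notin H$, then $G=\langle h\rangle H$, so $g=h^a u$ with $u\in H$ and $[g,h]=[u,h]=1$, a contradiction; if $h \in \operatorname{Z}(H)\cap\Phi$, then $h \in \operatorname{Z}(\Phi)$ (it lies in $\Phi$ and centralizes $\Phi \le H$), so $g \in \operatorname{C}_G(\operatorname{Z}(\Phi))$ gives $[g,h]=1$, again a contradiction. The residual case $h \in \operatorname{Z}(H)\setminus\Phi$ is genuinely possible a priori, carries all the difficulty, and nothing in your write-up addresses it; it is precisely what the published argument is designed to eliminate (in \cite{Deac} the hypothesis is also exploited earlier, to make the assignment $x \mapsto xz$, identity on $H$, an automorphism for a suitable $z$ of order $p$ in $\operatorname{Z}(\Phi)\cap\operatorname{Z}(H)$, using that $x \in \operatorname{C}_G(\operatorname{Z}(\Phi))\setminus\Phi$ commutes with $z$). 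A minor additional slip: your degenerate case fails at $G \cong \mathbb{Z}/p\mathbb{Z}$, where $\mathrm{GL}_1(\mathbb{F}_p)$ has no element of order $p$; the statement, like \cite{Deac}, is implicitly for nonabelian $G$, and then $\Phi \neq 1$ is automatic, so that discussion should be replaced rather than patched.
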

Note that M. Ghoraishi improved Proposition \ref{Deac} in \cite{Gho}, where he reduced  the problem of Berkovich to the  $p$-groups $G$ satisfying $H \leq \operatorname{C}_G(H) = \Phi(G)$, where $H$ is the inverse image of $\Omega_1(\operatorname{Z}(G/\operatorname{Z}(G)))$ in $G$.  A family of examples which satisfy the condition $\operatorname{C}_G(\operatorname{Z}(\Phi(G)) = \Phi(G)$ and do not satisfy Ghoraishi's condition can be found in the same paper.   

\begin{proof}[Proof of Theorem \ref{main2}]
Assume for a contradiction that every automorphism of $G$ of order $p$ is inner.  Let be $A=\operatorname{Z}(\Phi(G))$.  By Proposition \ref{Deac}, we have $\operatorname{C}_G(A)=\Phi(G)$ and so $\operatorname{C}_G(\operatorname{C}_G(A))=A$.  If we prove that $\operatorname{Der}(G/\operatorname{C}_G(A),A)=\operatorname{Der}(G/\Phi(G),\operatorname{Z}(\Phi(G))$ has exponent $p$, then our first assumption together with Proposition \ref{Cores} imply that $\hat{H}^1(G/\Phi(G),\operatorname{Z}(\Phi(G)))=0$, which contradicts Theorem \ref{main}.  So we need only to prove, for any derivation $\delta \in \operatorname{Der}(G,\operatorname{Z}(\Phi(G))$ which  is trival on $\Phi(G)$, that    $\delta(x)^p=1, \mbox{ for all } x\in G$.  Indeed 
$$\delta(x^p)=\delta(x) \delta(x)^x \dots \delta(x)^{x^{p-1}}=(\delta(x)x^{-1})^px^p$$
As $\delta$ is trivial on $\Phi(G)$, we have $\delta(x^p)=(\delta(x)x^{-1})^px^p=1$, and since $G$ is semi-abelian it follows that $\delta(x)^p=1$.   
\end{proof}
\section{Remarks on a particular class of semi-abelian $p$-groups}
 M. Y. Xu  proved in \cite{Xu1}, that any finite $p$-group $G$, $p$ odd, which satisfies $\Omega_1(\gamma_{p-1}(G))\leq \operatorname{Z}(G)$ is semi-abelian; and he asked if such a group must be power closed, that is every element of $G^{p^n}$ is a $p^n$-th power.  
 
 A negative answer to this question will follow from the following  important result of D. Bubboloni and G. Corsi Tani (see \cite{Bub}).  

Recall that a $p$-central group is a group in which every element of order $p$ is central.  D. Bubboloni and G. Corsi Tani used the term TH-group instead of $p$-central group, where   TH-group refers to J. G. Thompson, who seems to be the first to observe the importance of $p$-central groups (see \cite[Hilfssatz III.12.2]{Hup}).    
\begin{thm}\label{Bub}
Let be $d$ and $n$ two positive integers, $p$ an odd prime, and $F$  the free groupe on $d$ generators.  Then $G_n=F/\lambda_{n+1}^p(F)$ is a $p$-central $p$-group.  More precisely we have $\Omega_1(G_n)=\lambda_{n}^p(F)/ \lambda_{n+1}^p(F)$.  
\end{thm}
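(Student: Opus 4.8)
The two assertions reduce to a single claim about the associated graded of the lower $p$-series, so the plan is to make that reduction first. Write $L_m=\lambda_m^p(F)/\lambda_{m+1}^p(F)$ and $L=\bigoplus_{m\ge 1}L_m$. Each $L_m$ is elementary abelian, and since $[\lambda_m^p(F),F]\le\lambda_{m+1}^p(F)$ the top layer $\lambda_n^p(F)/\lambda_{n+1}^p(F)$ is central in $G_n$; in particular it is contained in $\Omega_1(G_n)$ automatically. For the reverse inclusion it is enough to establish the weight-shift property: if $g\in\lambda_m^p(F)\setminus\lambda_{m+1}^p(F)$ with $m\le n$, then $g^p\in\lambda_{m+1}^p(F)\setminus\lambda_{m+2}^p(F)$. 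Granting this, an element lying outside $\lambda_n^p(F)$ has $p$-th power outside $\lambda_{n+1}^p(F)$, so $g^p\in\lambda_{n+1}^p(F)$ forces $g\in\lambda_n^p(F)$; hence every element of order dividing $p$ already lies in the central top layer, which yields both $\Omega_1(G_n)=\lambda_n^p(F)/\lambda_{n+1}^p(F)$ and the $p$-centrality of $G_n$.

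The content of the weight-shift property is that the $p$-power map is injective on each graded piece. The group commutator and the $p$-th power endow $L$ with a graded Lie bracket over $\mathbb{F}_p$ and a degree-raising operation $\xi\colon L_m\to L_{m+1}$, $\xi(\bar g)=\overline{g^{\,p}}$. First I would check, via the Hall--Petrescu collection formula, that $\xi$ is well defined and additive: for $g,h\in\lambda_m^p(F)$ the correction terms in $(gh)^p=g^p h^p\cdots$ are iterated commutators of $\lambda$-weight $pm$ together with $p$-divisible contributions, and for odd $p$ one has $pm\ge m+2$, so all of these vanish in $L_{m+1}$ and $(gh)^p\equiv g^p h^p\pmod{\lambda_{m+2}^p(F)}$. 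This is exactly the place where the hypothesis that $p$ is odd is used; the prime $2$ is genuinely exceptional here.

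It then remains to prove that $\xi$ has trivial kernel, and for this I would bring in the structure theory of $\operatorname{gr}^{\lambda}(F)$. By the theory of the lower $p$-series of a free group (Lazard; compare the dimension-subgroup description of Quillen and Jennings), $L$ equipped with its bracket and $\xi$ is the free object on the $d$ generators, and, after forgetting the grading, it is the free restricted Lie $\mathbb{F}_p$-algebra on $d$ generators. Embedding it by the restricted Poincar\'e--Birkhoff--Witt theorem into its restricted enveloping algebra, which is the free associative algebra $A=\mathbb{F}_p\langle X_1,\dots,X_d\rangle$, carries $\xi$ to the ordinary $p$-th power $u\mapsto u^p$. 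Since $A$ has no zero divisors, $u\neq 0$ implies $u^p\neq 0$, so $\xi(\bar g)=0$ forces $\bar g=0$; combined with additivity this gives injectivity of $\xi$ on each $L_m$, which is the weight-shift property.

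I expect the main obstacle to be this structural input, namely identifying $\operatorname{gr}^{\lambda}(F)$ as a free object and recognising its $p$-operation as a genuine associative $p$-th power inside a domain. The difficulty is that the lower $p$-series and the dimension-subgroup series do not agree (their gradings differ), so one must transport the $p$-operation correctly through the identification rather than quote the dimension-subgroup picture verbatim. The remaining ingredients---the initial reduction, the Hall--Petrescu bookkeeping that makes $\xi$ additive for odd $p$, and the domain argument---are comparatively routine once this framework is in place.
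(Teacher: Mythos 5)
First, a note on context: the paper does not prove this statement at all --- it is quoted as a theorem of Bubboloni and Corsi Tani \cite{Bub} --- so there is no internal proof to compare you against, and I am judging your argument on its own merits.

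Your reduction is correct: everything does come down to the weight-shift property, i.e.\ the injectivity of the induced map $\xi\colon L_m\to L_{m+1}$ on the graded pieces of the lower $p$-series, and the Hall--Petrescu bookkeeping showing that $\xi$ is well defined and additive for odd $p$ is routine and sound. The gap is exactly where you suspected it, and it is fatal as written: $(L,[\,\cdot\,,\cdot\,],\xi)$ is \emph{not} the free restricted Lie algebra on $d$ generators, and $\xi$ cannot be carried to the associative $p$-th power by any embedding into $\mathbb{F}_p\langle X_1,\dots,X_d\rangle$. Indeed, you have just proved that $\xi$ is additive on each $L_m$, whereas the $p$-th power map on the span of the generators of a free associative algebra is not additive: Jacobson's formula $(u+v)^p=u^p+v^p+\sum_i s_i(u,v)$ has nonvanishing Lie correction terms. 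Likewise $\xi$ satisfies $\xi[x,y]=[\xi x,y]$, while in an associative algebra $[u^p,v]=(\operatorname{ad}u)^p(v)$, which is not $[u,v]^p$. So the operation $\xi$ obeys identities incompatible with those of a restricted $p$-map; the restricted PBW theorem does not apply, and the domain argument collapses. (A quick sanity check: for $d=2$ the piece $L_2$ has dimension $3$, spanned by the classes of $x^p$, $y^p$, $[x,y]$, which already does not match the degree-$2$ component of the free restricted Lie algebra for odd $p$.)

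What is actually needed at this point is Lazard's structure theory for the graded object of the lower $p$-central series of a free group: $L$ is the free object in the category of graded Lie $\mathbb{F}_p$-algebras equipped with a degree-raising operator $\pi$ satisfying precisely the identities you derived, and as such it is a \emph{free} $\mathbb{F}_p[\pi]$-module --- which is exactly the injectivity of $\xi$ you are after. Concretely this rests on the canonical form $\lambda_n(F)=\prod_{i+j\ge n}\gamma_i(F)^{p^j}$ together with the collection process and the linear independence of $p^j$-th powers of basic commutators; some input of this kind is unavoidable and is essentially the route taken in \cite{Bub}. Without it, your argument establishes only that $\xi$ is a well-defined homomorphism, which is true but does not touch the actual content of the theorem.
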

\begin{cor}
A finite $p$-group, $p$ odd, which satisfies $\Omega_1(\gamma_{p-1}(G))\leq \operatorname{Z}(G)$ needs not be necessarily power closed.   
\end{cor}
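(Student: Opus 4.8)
The plan is to show that the $p$-central groups $G_n=F/\lambda_{n+1}^p(F)$ of Theorem \ref{Bub} are themselves the required counterexamples, once we fix $p=3$ and take $n$ large (in fact $d=2$, $n=3$ already suffices). The first, routine step is to check these groups meet the hypothesis and are semi-abelian. Since $G_n$ is $p$-central, every element of order $p$ is central, so $\Omega_1(G_n)\leq \operatorname{Z}(G_n)$; as $\Omega_1(\gamma_{p-1}(G_n))\leq \Omega_1(G_n)$, the condition $\Omega_1(\gamma_{p-1}(G_n))\leq \operatorname{Z}(G_n)$ holds, and Xu's theorem then gives that $G_n$ is semi-abelian. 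In particular properties (i) and (ii) are available for $G_n$.

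Next I would reduce non-power-closedness to a single counting inequality. Because $G_n$ is semi-abelian, the map $g\mapsto g^p$ has fibres the cosets of $\Omega_1(G_n)$: indeed $g^p=h^p\Leftrightarrow (gh^{-1})^p=1\Leftrightarrow gh^{-1}\in\Omega_1(G_n)$, using (i). Hence the \emph{set} of $p$-th powers has exactly $|G_n:\Omega_1(G_n)|$ elements, which is property (ii) in the form $|G_n:G_n^{p}|\leq|\Omega_1(G_n)|$; moreover equality holds precisely when this set coincides with the subgroup $G_n^p$, i.e.\ precisely when $G_n$ is power closed at the first level. Thus it is enough to produce $G_n$ with the strict inequality $|G_n:G_n^{p}|<|\Omega_1(G_n)|$.

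I would then bound the two sides in opposite directions. By Theorem \ref{Bub}, $\Omega_1(G_n)=\lambda_n^p(F)/\lambda_{n+1}^p(F)$, and the graded pieces of the lower $p$-central series of a nonabelian free group grow without bound with $n$ (a standard consequence of Lazard's description of the associated graded Lie algebra), so $|\Omega_1(G_n)|\to\infty$. On the other hand $G_n/G_n^{p}=F/\bigl(F^p\lambda_{n+1}^p(F)\bigr)$ is a quotient of the free Burnside group $F/F^p=B(d,3)$, which is finite; hence $|G_n:G_n^{p}|\leq|B(d,3)|$ is bounded independently of $n$. Comparing the two, $|G_n:G_n^{p}|<|\Omega_1(G_n)|$ for all large $n$, and such a $G_n$ is not power closed.

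The main obstacle is the lower bound on $|\Omega_1(G_n)|=|\lambda_n^p(F)/\lambda_{n+1}^p(F)|$, that is, control of the graded dimensions of the lower $p$-central series of $F$. For the explicit threshold $d=2$, $p=3$, $n=3$ one checks that $B(2,3)$ has class $2$, so that $|G_3:G_3^{3}|=|B(2,3)|=27$, while $\dim_{\mathbb{F}_3}\bigl(\lambda_3^3(F)/\lambda_4^3(F)\bigr)\geq 4$ (the two weight-$3$ basic commutators together with the images of $x^9$ and $y^9$), whence $|\Omega_1(G_3)|\geq 81>27$. Pinning these dimensions down rigorously rests on the structure of the associated graded Lie algebra of the lower $p$-central series, and this is the one technical input needed beyond Theorem \ref{Bub}; the boundedness of $|G_n:G_n^{p}|$, by contrast, is the easy side.
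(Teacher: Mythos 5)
Your route is genuinely different from the paper's. The paper never exhibits an explicit non-power-closed $G_n$: it notes that the groups $G_n$ are $p$-central and hence satisfy Xu's hypothesis, that every finite $p$-group is a quotient of some $G_n$ (for suitable $d$ and $n$), and that power-closedness is inherited by quotients; so if every group satisfying the hypothesis were power closed, every finite $p$-group would be power closed, a contradiction. That argument costs nothing beyond Theorem~\ref{Bub} and works uniformly for every odd $p$. By contrast, you try to certify a specific counterexample. Your reduction --- a semi-abelian $G$ is power closed at the first level if and only if $|G:G^p|=|\Omega_1(G)|$ --- is correct (the fibre count via properties (i) and (ii) is fine), and so is the upper bound $|G_n:G_n^p|\le |B(2,3)|=27$ for $p=3$.

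The gap is the lower bound, which is where all the content of your proof now sits. You assert $\dim_{\mathbb{F}_3}\bigl(\lambda_3^3(F)/\lambda_4^3(F)\bigr)\ge 4$ by listing $[x,y,x]$, $[x,y,y]$, $x^9$, $y^9$, but you do not prove these are independent modulo $\lambda_4^3(F)$, and the appeal to ``Lazard's description of the associated graded Lie algebra'' does not directly apply: that description concerns the Zassenhaus--Jennings--Lazard filtration, not the lower exponent-$p$ central series $\lambda_n^p$, and the rank formulas for the $\lambda$-layers of a free group are precisely the kind of nontrivial input for which one needs \cite{Bub} or its antecedents. So this step is true but unproved, and it is the crux of your argument rather than a routine verification. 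There is also a structural limitation: bounding $|G_n:G_n^p|$ by the order of the free Burnside group of exponent $p$ requires that group to be finite, which holds for $p=3$ but is unknown or false for larger odd primes; your method therefore cannot establish the corollary for every odd $p$, although a single counterexample does suffice to answer Xu's question. To complete your version, supply a citation or computation for the order of $F/\lambda_4^3(F)$; otherwise the paper's quotient argument is both shorter and fully general.
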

\begin{proof}
Obviously a $p$-central $p$-group satisfies $\Omega_1(\gamma_{p-1}(G))\leq \operatorname{Z}(G)$.  Assume for a contradiction that the result is false.  So the $p$-groups $G_n$ are all power closed. Now every finite $p$-group is a quotient of some $G_n$ (for appropriates $n$ and $d$),  and the property of being power closed is inherited by quotients. It follows  that every finite $p$-group is power closed, which is a contradiction.
\end{proof}

Let us mention  briefly  another consequence of  Theorem \ref{Bub}.  It is largely believed that finite $p$-central $p$-groups are dual (in a sense) to powerful $p$-groups.  Since the inverse limits of powerful $p$-groups have  roughly a uniform structure (see \cite[\S 3]{DSMS}), which qualifies them to play a central role in studying analytic pro-$p$ groups (see \cite{DSMS}), it is natural to ask if there is a restriction on the structure of a {\itshape pro- $p$-central $p$-group}, that is an inverse limit of finite  $p$-central $p$-groups.   

Let $F$ be the free group on a finite number of generators.  As every normal subgroup of $F$ of $p$-power index contains a subgroup $\lambda_{n}^p(F)$ for some $n$, it follows that
$$\widehat{F}_p \cong \lim_{\longleftarrow}  F/\lambda_{n}^p(F)$$ 
where $\widehat{F}_p$ is the pro-$p$ completion of $F$.  This shows that the free pro-$p$ group $\widehat{F}_p$ is pro $p$-central, so there is no reasonable restriction on the structure of a (finitely generated)  pro $p$-central $p$-group.\\

\begin{center}{\textbf{Acknowledgments}}
\end{center}

This work could not be completed without the help of Daniela Bubbolon, Urban Jezernik and Andrea Caranti.  We are really grateful to them.

\address{ Department of Mathematics, University Kasdi Merbah Ouargla Ouargla, Algeria. \\ {\tt Email: benmoussageo@gmail.com}  \\
{\tt Email: yassine\_guer@hotmail.fr} }

\end{document}